\newtheorem{Theorem}{Theorem}
\newtheorem{Corollary}[Theorem]{Corollary}
\newtheorem{Definition}[Theorem]{Definition}
\newtheorem{Lemma}[Theorem]{Lemma}
\DeclareMathOperator*{\E}{\mathbb{E}}
\begin{document}
\title[GCD sums and sum-product estimates]{GCD sums and sum-product estimates}
\author[Thomas F. Bloom]{Thomas F. Bloom}
\address{Heilbronn Institute for Mathematical Research, Department of Mathematics, University of Bristol, 
University Walk, Bristol BS8 1TW, United Kingdom}
\email{matfb@bristol.ac.uk}
\author[Aled Walker]{Aled Walker}
\address{Andrew Wiles Building, University of Oxford, Radcliffe Observatory Quarter, Woodstock Rd, Oxford OX2 6GG, United Kingdom}
\email{walker@maths.ox.ac.uk}
\date{}
\begin{abstract}
In this note we prove a new estimate on so-called GCD sums (also called G\'{a}l sums), which, for certain coefficients, improves significantly over the general bound due to de la Bret\`{e}che and Tenenbaum. We use our estimate to prove new results on the equidistribution of sequences modulo 1, improving over a result of Aistleitner, Larcher, and Lewko on how the metric poissonian property relates to the notion of additive energy. In particular, we show that arbitrary subsets of the squares are metric poissonian.
\end{abstract}
\maketitle

\section{Introduction}
For any two natural numbers $n$ and $m$, let $(n,m)$ denote their greatest common divisor. Let $A$ be a finite set of natural numbers, and let $f:A\longrightarrow \mathbb{C}$ be any function. For $\alpha \in (0,1]$, the so-called GCD sum
\begin{equation}
\label{gcd sum}
\sum\limits_{a,b\in A} \frac{(a,b)^{2\alpha}}{(ab)^\alpha} f(a)\overline{f(b)}
\end{equation}
\noindent has received particular interest (\cite{G49,DH86,ABS15,BS15,BS17,LR17,BT18}), owing to its connections to the resonance method for finding large values of $\zeta(\alpha + it)$ (for instance in \cite{A16, BT18}) and to equidistribution problems (for instance in \cite{ALL16}). 

As mentioned by the authors of \cite{ABS15} and \cite{BT18}, the value $\alpha = 1/2$ represents a critical case, and we shall pay especial attention to this value in our arguments. The best bound in this instance was established by Bondarenko and Seip.

\begin{Theorem}[\cite{BS15}]
\label{Theorem Breteche Tenenbaum}
Let $A \subset \mathbb{N}$ with $\vert A \vert = N$, and let $f:A\longrightarrow \mathbb{C}$ be any function. Then
\[\Big\vert\sum\limits_{a,b\in A} \frac{(a,b)}{\sqrt{ab}} f(a)\overline{f(b)} \Big\vert \leq \exp\left(C\sqrt{\frac{\log N \log\log\log N}{\log \log N}}\right)\| f\|_2^2,\] 
for some absolute constant $C>0$, provided that $N$ is large enough for the logarithms to be defined. 
\end{Theorem}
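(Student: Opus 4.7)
My plan is to begin from the classical identity $(a,b) = \sum_{d \mid (a,b)} \phi(d)$, which rewrites the GCD sum as the manifestly nonnegative expression
\[
\sum_{d \ge 1} \phi(d)\Big|\sum_{\substack{a \in A \\ d \mid a}} \frac{f(a)}{\sqrt{a}}\Big|^{2}.
\]
In particular the quadratic form is positive semidefinite, and the task reduces to bounding the largest eigenvalue $\lambda = \lambda(A)$ of the $|A| \times |A|$ kernel matrix $K(a,b) = (a,b)/\sqrt{ab}$. After Cauchy--Schwarz one can assume that the extremising $f$ is nonnegative, a minor but useful technical convenience.

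The key structural observation I would then exploit is the multiplicative factorisation
\[
\frac{(a,b)}{\sqrt{ab}} \;=\; \prod_{p} p^{-|v_{p}(a) - v_{p}(b)|/2},
\]
which morally turns $K$ into a tensor product over primes. A natural first step is to reduce to the case in which $A$ consists of $y$-smooth numbers, for some parameter $y$ to be optimised later. An element of $A$ with a prime factor $p > y$ contributes a factor of at most $p^{-1/2}$ in a typical pair, which by a Rankin / Cauchy--Schwarz argument should let one pay only a small multiplicative cost and project $A$ onto its $y$-smooth part.

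For $y$-smooth $A$ I would try to convert the multiplicative structure into an Euler product bound on $\lambda$. The local Toeplitz matrix $(p^{-|j-k|/2})_{j,k \ge 0}$ has operator norm $O((1-p^{-1/2})^{-1})$, and if the primes genuinely decoupled one would obtain
\[
\lambda \;\le\; \prod_{p \le y} \frac{C}{1-p^{-1/2}} \;=\; \exp\!\Big(O\!\Big(\sum_{p \le y} p^{-1/2}\Big)\Big) \;=\; \exp\!\big(O(\sqrt{y}/\log y)\big)
\]
by Mertens. To hit the target exponent, one sets $\sqrt{y}/\log y$ equal to $\sqrt{(\log N \log\log\log N)/\log\log N}$, which gives $y$ of order $\log N \cdot \log\log N \cdot \log\log\log N$. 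The constraint that pins $y$ down is that $|A| = N$: Sathe--Selberg estimates on smooth numbers bound how many distinct primes up to $y$ can actually occur in elements of $A$, and this is what should force the triple-log saving inside the square root rather than the cruder $\sqrt{\log N / \log\log N}$.

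The main obstacle is that the primes do not in fact decouple: $f$ is a single function on $A$, not a tensor in $\bigotimes_{p} \ell^{2}(\mathbb{Z}_{\ge 0})$. Passing from the heuristic Euler product to a genuine inequality requires an iterative procedure that at each step peels off one prime (or one dyadic block of primes), replacing the problem on $A$ by a smaller problem on a derived set $A'$ with $\lambda$ reduced by the corresponding local factor. Executing this induction while tracking the triple-logarithmic corrections carefully --- so as to land on the stated exponent rather than a looser $\sqrt{\log N \log\log N}$ --- is where I expect the bulk of the technical work to sit.
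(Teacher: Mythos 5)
The paper does not prove this theorem: it is quoted from Bondarenko and Seip \cite{BS15} (with the sharp constant later determined by de la Bret\`{e}che and Tenenbaum \cite{BT18}), so there is no internal proof to compare against. Judged on its own, your outline has the right general flavour --- the prime-by-prime factorisation of $(a,b)/\sqrt{ab}$, reduction to $y$-smooth numbers, and the Euler-product bound $\prod_{p\le y}\lVert T_p\rVert=\exp(O(\sqrt y/\log y))$ are all genuinely in the spirit of the literature --- but the argument has a gap exactly where the whole theorem has to be proved. One point in your favour that you seem not to have noticed: once $A$ consists of $y$-smooth numbers, the tensor-product bound is \emph{rigorous}, not heuristic. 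The space $\ell^2(A)$ embeds as a subspace of $\bigotimes_{p\le y}\ell^2(\mathbb{Z}_{\ge 0})$, the kernel $K$ on $A$ is the compression of $\bigotimes_{p\le y}T_p$ to that subspace, and a compression never has larger operator norm than the ambient operator. The fact that $f$ is not a tensor is irrelevant for an upper bound, so ``the primes do not decouple'' is not the real obstacle.

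The real obstacle is the step you wave away with ``a Rankin / Cauchy--Schwarz argument should let one pay only a small multiplicative cost.'' It does not work as stated: if $a=b=p$ for a large prime $p>y$ then $(a,b)/\sqrt{ab}=1$, with no decay, and projecting onto $y$-smooth parts can collapse distinct elements of $A$ and destroy control of $\lVert f\rVert_2$. More tellingly, a clean reduction to $y$-smooth with $y\asymp\log N$ would yield $\exp(O(\sqrt{\log N}/\log\log N))$, which is \emph{smaller} than the Bondarenko--Seip bound, and \cite{BT18} shows that bound is sharp; so the naive reduction must be impossible, and the unavoidable loss in any reduction is precisely where the $\log\log\log N$ comes from. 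In \cite{BS15,BT18} this is handled by a delicate combinatorial compression (choosing representative smooth divisors and tracking multiplicities and the contribution of high prime powers), not by a Rankin trick, and the Sathe--Selberg estimates you invoke are not the relevant input. As it stands the proposal correctly guesses the shape of the answer but does not contain a proof.
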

\noindent Here, and throughout the paper, \[ \Vert f\Vert _ 1 : = \sum\limits_{a\in A} \vert f(a) \vert, \qquad \Vert f\Vert_2^2 : = \sum\limits_{a \in A} \vert f(a) \vert^2.\]

The authors de la Bret\`{e}che and Tenenbaum have recently shown \cite{BT18} that Theorem \ref{Theorem Breteche Tenenbaum} is true with the value $C=2\sqrt{2}+o(1)$, and that $2\sqrt{2}$ is the best possible constant. The purpose of this note is to use results from arithmetic combinatorics, of a `sum-product' flavour, to improve this bound in the special case in which the coefficient function $f$ enjoys some additional structure. We will then given an application of this result to metric number theory.\\

We prove the following general result for $\alpha$ in the range $1/2 \leqslant \alpha \leqslant 1$.

\begin{Theorem}
\label{Theorem general bound off critical line}
Let $f:\mathbb{N} \longrightarrow \mathbb{C}$ be any function with finite support. Let $K$ be some parameter greater than $3$ such that 
\begin{equation}
\label{multiplicative energy bound}
\sum\limits_{ab=cd} f(a)f(b) \overline{f(c)f(d)} \leqslant K\Vert f\Vert_2^4.
\end{equation} Then \[\sum\limits_{a,b} f(a) \overline{f(b)} \frac{(a,b)^{2}}{ab} \ll (\log \log K)^{O(1)} \Vert f\Vert_2^2.\]
\noindent Furthermore, for $\alpha$ in the range $1/2 < \alpha < 1$, \[ \sum\limits_{a,b} f(a) \overline{f(b)} \frac{(a,b)^{2\alpha}}{(ab)^{\alpha}} \ll \exp(O_\alpha( (\log K)^{1-\alpha}(\log\log K)^{-\alpha})) \Vert f\Vert_2^2.\]

\noindent Finally, at the critical value $\alpha=1/2$, if $\Vert f\Vert_1 \geqslant 3$ and $K\geqslant \log \Vert f \Vert_1$ then
\[\sum_{a,b}f(a)\overline{f(b)}\frac{(a,b)}{\sqrt{ab}}\ll (\log \Vert f\Vert _ 1 + O(1))^{-1}\exp(O((\log K\log \log \Vert f\Vert _1)^{1/2}))\|f\|_2^2.\]
\end{Theorem}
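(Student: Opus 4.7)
The plan is to diagonalise the GCD weight via a divisor expansion and then exploit the energy hypothesis through a fourth-moment bound on an associated Dirichlet polynomial. Writing $(a,b)^{2\alpha} = \sum_{d \mid (a,b)} h_\alpha(d)$ for the multiplicative function $h_\alpha$ determined by $h_\alpha(p^k) = p^{2\alpha k}(1 - p^{-2\alpha})$, and changing variables $a = dm$, $b = dn$, one obtains
$$\sum_{a,b} f(a)\overline{f(b)} \frac{(a,b)^{2\alpha}}{(ab)^\alpha} = \sum_d g_\alpha(d) \Bigl|\sum_n \frac{f(dn)}{n^\alpha}\Bigr|^2, \qquad g_\alpha(d) := \prod_{p \mid d}\bigl(1 - p^{-2\alpha}\bigr).$$
Since $0 \leq g_\alpha(d) \leq 1$, the task reduces to controlling the partial Dirichlet sums $F_d(\alpha) := \sum_n f(dn)/n^\alpha$, summed over $d$ against the bounded multiplicative weight $g_\alpha$.

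Next I would reinterpret the energy hypothesis \eqref{multiplicative energy bound} as an $L^4$ bound on the Dirichlet polynomial $F(it) := \sum_n f(n) n^{-it}$. By Plancherel, once $T$ exceeds the multiplicative diameter of $\operatorname{supp}(f)$ one has essentially $\tfrac{1}{2T} \int_{-T}^T |F(it)|^4 \, dt = \sum_{ab = cd} f(a)f(b)\overline{f(c)f(d)} \leq K \|f\|_2^4$. A smoothed Perron inversion then recovers each $F_d(\alpha)$ as a vertical-contour integral of $F$, and Cauchy--Schwarz against the $L^4$ bound yields a pointwise estimate of the shape $|F_d(\alpha)|^2 \ll \Phi(K, T, d)\, \|f\|_2^2$ with an explicit dependence on the free contour height $T$.

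Summing over $d$ dyadically then produces the three cases of the theorem. For $\alpha = 1$ the weight $g_1(d)$ yields a convergent Euler product in $d$, and only the logarithmic losses from the Perron step survive, giving the $(\log\log K)^{O(1)}$ factor. For $1/2 < \alpha < 1$ the series $\sum_d g_\alpha(d) d^{-(2\alpha-1)}$ converges, and optimising $T$ against $K$ by a standard parameter choice yields the exponent $\exp(O_\alpha((\log K)^{1-\alpha}(\log\log K)^{-\alpha}))$. At the critical $\alpha = 1/2$ the $d$-series diverges logarithmically, so I would truncate at $d \lesssim \|f\|_1$ (using that $F_d \equiv 0$ beyond the support scale) and extract the factor $(\log \|f\|_1)^{-1}$ from the truncation range.

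The main technical obstacle sits in this critical case. Both ensuring that the Perron transfer loses no more than a $(\log\log \|f\|_1)^{1/2}$ factor in the exponent, and arranging for the $(\log \|f\|_1)^{-1}$ saving to come out of a clean truncation rather than leaking back into the exponential, require careful smoothing choices. Balancing these to produce the precise $\exp(O((\log K \log\log \|f\|_1)^{1/2}))$ growth is where essentially all the work will lie; the $\alpha = 1$ and intermediate-$\alpha$ cases should follow from the same scheme with the parameters tuned accordingly.
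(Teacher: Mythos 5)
Your opening diagonalisation is correct and classical: writing $(a,b)^{2\alpha}=\sum_{d\mid(a,b)}h_\alpha(d)$ and changing variables indeed yields $\sum_{a,b}f(a)\overline{f(b)}(a,b)^{2\alpha}/(ab)^\alpha=\sum_d g_\alpha(d)\bigl|F_d(\alpha)\bigr|^2$. This is a legitimate reformulation, and the ambition of feeding the hypothesis in as an $L^4$ mean value of $F(it)=\sum f(n)n^{-it}$ is the deterministic shadow of what the paper actually does. The paper, following Lewko and Radziwi\l\l, replaces the vertical averages $\tfrac{1}{2T}\int_{-T}^{T}\cdots\,dt$ by an expectation over a random multiplicative function $X(n)$, and uses the identity $\zeta(2\alpha)\sum_{a,b}f(a)\overline{f(b)}(a,b)^{2\alpha}/(ab)^\alpha=\E\bigl|\zeta_X(\alpha)D(X)\bigr|^2$ together with the moment bounds of Lemma~\ref{Lemma Lewko Radziwill} for the random Euler product $\zeta_X$.

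The genuine gap in your plan is the ``Perron inversion'' step and what it forces you to confront. There is no vertical-contour identity expressing $F_d(\alpha)=\sum_n f(dn)n^{-\alpha}$ in terms of $F(it)$ alone: selecting multiples of $d$ is not a condition that $n^{-it}$-twists isolate. What a mean-value recovery actually produces is $F_d(\alpha)=\lim_{T\to\infty}\tfrac{1}{2T}\int_{-T}^{T}F(it)\,d^{it}\zeta(\alpha-it)\,dt$, so the Riemann zeta function itself enters as the kernel. From that point on you would need high moments of $\zeta(\sigma+it)$ with $\sigma$ arbitrarily close to $1/2$ (the $\alpha=1/2$ case of the theorem is reached by taking $\alpha=1/2+1/\log\|f\|_1$), and these are not known unconditionally; this is exactly the obstruction that the random model $\zeta_X$ is designed to sidestep, since $\E|\zeta_X(\alpha)|^{2l}$ is controlled for all $l$ by Lemma~\ref{Lemma Lewko Radziwill}. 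A secondary but also fatal issue is that you are aiming for a \emph{pointwise} bound on each $|F_d(\alpha)|^2$: the $L^4$ information on $F$ is global and cannot be converted into useful bounds on each restricted sub-polynomial $F_d$ individually (already $F_1(1/2)$ can be as large as $\|f\|_2(\log N)^{1/2}$, and summing any such pointwise bounds against $g_\alpha(d)$ over $d\le\|f\|_1$ loses far too much). The paper avoids this by keeping everything inside a single average: it splits $\E|\zeta_X D|^2$ at a threshold $V$, applies Cauchy--Schwarz to arrive at $\E(|D|^4)^{1/2}\E(|\zeta_X|^{4l+4})^{1/2}$, feeds in \eqref{multiplicative energy bound} for the first factor and Lemma~\ref{Lemma Lewko Radziwill} for the second, then optimises $V$ and $l$; the $\alpha=1/2$ case is handled by H\"older interpolation from $\alpha=1/2+1/\log\|f\|_1$. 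If you want to salvage your outline, you would need to replace the Perron step by precisely this kind of averaged second-moment identity with a surrogate for $\zeta$ whose high moments you control, which in effect reinvents the random Euler product.
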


One can fruitfully apply these bounds when $f$ has some additive structure, for example when $f(n)=r(n)$, where
\[r(n)=r_A(n)=\lvert \{ (a,b)\in A^2 : a-b=n\}\rvert\]
for some finite set $A\subset \mathbb{N}$. A suitable upper bound on the multiplicative energy of $r(n)$ is provided by the sum-product techniques of arithmetic combinatorics. 

\begin{Theorem}
\label{Theorem multiplicative energy}
If $A\subset\mathbb{N}$ is a finite set and $r=r_A$ is the representation function defined above then
\[\sum_{kl=mn}r(k)r(l)r(m)r(n)\ll \lvert A\rvert^6\log \lvert A\rvert.\]
\end{Theorem}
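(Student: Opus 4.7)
The plan is to reinterpret the left-hand side combinatorially, decompose it by the rational slope of the differences, and bound each resulting summand by an incidence / additive-energy estimate. Specifically, $\sum_{kl=mn}r(k)r(l)r(m)r(n)$ is the number of $8$-tuples $(a_i,b_i)_{i=1}^4\in A^8$ satisfying $(a_1-b_1)(a_2-b_2)=(a_3-b_3)(a_4-b_4)$; the tuples with some $a_i=b_i$ contribute only $O(\lvert A\rvert^6)$ and can be set aside. For the nondegenerate tuples, parameterizing by the coprime pair $(p,q)$ with $(a_1-b_1)/(a_3-b_3) = p/q$ in lowest terms yields the identity
\[\sum_{kl=mn}r(k)r(l)r(m)r(n) = \sum_{\substack{p,q\ge 1\\(p,q)=1}}S(p,q)^2 + O(\lvert A\rvert^6),\]
where $S(p,q) = \sum_t r(pt)r(qt) = \lvert\{(a,b,c,d)\in A^4 : q(a-b)=p(c-d)\}\rvert$.

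The next step is to bound $S(p,q)$ for coprime $(p,q)$. Since $q(a-b)=p(c-d)$ together with $(p,q)=1$ forces $p\mid a-b$, one has
\[S(p,q) = \sum_s r(ps)r(qs) \le \lvert A\rvert\cdot\lvert\{(a,b)\in A^2 : p\mid a-b\}\rvert \le \lvert A\rvert^2\cdot\max_b\lvert A\cap(b+p\mathbb{Z})\rvert,\]
and by symmetry the same bound holds with $q$ in place of $p$. In the favourable case where $A$ is roughly equidistributed in arithmetic progressions modulo $p$ and $q$, this gives $S(p,q)\ll \lvert A\rvert^3/\max(p,q)$; squaring and summing over coprime $(p,q)$ with $\max(p,q)\le\mathrm{diam}(A)$ produces the harmonic-type sum $\sum_m\phi(m)/m^2 \ll \log\lvert A\rvert$ and yields the claimed $\lvert A\rvert^6\log\lvert A\rvert$ bound.

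The main obstacle is to make this equidistribution argument rigorous uniformly in $A$, since for an arbitrary finite set $\max_b\lvert A\cap(b+p\mathbb{Z})\rvert$ need not be as small as $\lvert A\rvert/p$. One overcomes this by averaging over $(p,q)$ and converting pointwise failures into an $\ell^2$-average bound via Cauchy--Schwarz, possibly at the cost of an extra logarithmic factor that must then be absorbed through a dyadic decomposition by the size of $r(n)$. The exceptional slope $(p,q)=(1,1)$ is handled separately: there $S(1,1)=E_+(A)$ can be as large as $\lvert A\rvert^3$ but contributes only $O(\lvert A\rvert^6)$ to the total, which is acceptable.
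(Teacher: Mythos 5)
Your opening decomposition is essentially the paper's: writing $\sum_{kl=mn}r(k)r(l)r(m)r(n)=\sum_z\big\lvert\sum_{n/m=z}r(n)r(m)\big\rvert^2$ and parameterising slopes $z=p/q$ in lowest terms gives exactly your $\sum_{(p,q)=1}S(p,q)^2$. So the setup is correct. The divergence, and the genuine gap, is in how you propose to bound that sum of squares. The paper's proof does \emph{not} attempt to control individual $S(p,q)$ via equidistribution in residue classes; instead it proves (Lemma~\ref{Lemma application of szem trot}) the incidence bound $\sum_{z\in Z}S(z)\ll\lvert A\rvert^3\lvert Z\rvert^{1/2}$ for every finite set of slopes $Z$, by viewing $r(y,z)=\lvert\{(a,b)\in A^2:a+bz=y\}\rvert$ as an incidence count between the lines $\{y=a+bx\}_{(a,b)\in A^2}$ and points $(z,y)$, and invoking the Szemer\'{e}di--Trotter theorem. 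One then dyadically decomposes the slopes by the size of $S(z)$ and sums.

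The step you flag as ``the main obstacle'' really is fatal to the elementary route as sketched. The heuristic $S(p,q)\ll\lvert A\rvert^3/\max(p,q)$ relies on $\max_b\lvert A\cap(b+p\mathbb{Z})\rvert\ll\lvert A\rvert/p$, which fails badly for arbitrary $A$ (take $A$ inside a single residue class mod $p$, or a generalised arithmetic progression with step $p$). Your proposed repair --- averaging over $(p,q)$ and applying Cauchy--Schwarz --- is not carried out, and there is no obvious way to make it work: writing $M(p)=\max_c\lvert A\cap(c+p\mathbb{Z})\rvert$, your pointwise bound gives $\sum_{(p,q)=1}S(p,q)^2\le\lvert A\rvert^4\sum_{p,q}\min(M(p),M(q))^2$, and even in the perfectly equidistributed case this already produces $(\log)^2$ rather than the single logarithm claimed, while for structured $A$ the quantity $\sum_p M(p)$ can be far larger than $\lvert A\rvert\log\lvert A\rvert$. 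The content of Theorem~\ref{Theorem multiplicative energy} is exactly a sum-product statement --- that the difference multiset of an arbitrary $A$ cannot have large multiplicative energy --- and such statements are not accessible to residue-class counting plus Cauchy--Schwarz alone; this is precisely why the paper reaches for the Szemer\'{e}di--Trotter theorem (and why the original proof by Roche-Newton and Rudnev used the Guth--Katz machinery). To close the gap you would need to replace the equidistribution heuristic with the incidence estimate of Lemma~\ref{Lemma application of szem trot}, or an equivalent.
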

This theorem is due to Roche-Newton and Rudnev (it is essentially Proposition 4 of \cite{RNR15}) but in section \ref{section sum product} we give a much simpler proof due to Murphy, Roche-Newton, and Shkredov \cite{MRNS15}. 

As an immediate corollary, we obtain the following bound. 

\begin{Theorem}
\label{Theorem half line}

If $A\subset\mathbb{N}$ is a finite set with $\vert A\vert = N$, and $r=r_A$ is the representation function defined above, then, provided $N$ is large enough, \[ \sum\limits_{\substack{n_i,n_j \in A-A\\ n_i,n_j > 0 }} \frac{(n_i,n_j)}{\sqrt{n_i n_j}} r(n_i)r(n_j) \ll (\log N)^{O(1)}\exp(O(( \log\tfrac{N^3}{E(A)}\log\log N)^{1/2}))E(A),\] where \[E(A):=\sum_{n\in \mathbb{Z}} r(n)^2\] is the additive energy of $A$. 
\end{Theorem}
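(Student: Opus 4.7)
The plan is to derive Theorem \ref{Theorem half line} as an essentially immediate corollary of Theorem \ref{Theorem general bound off critical line} (at the critical exponent $\alpha = 1/2$) applied to the function $f$ equal to $r = r_A$ on the positive elements of $A - A$ and zero elsewhere. The task therefore reduces to computing $\Vert f\Vert_1$, $\Vert f\Vert_2^2$, and a valid value of $K$.

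First I compute the norms. Since $r(-n) = r(n)$ and $r(0) = N$, a direct calculation gives
\[
E(A) = \sum_{n \in \mathbb{Z}} r(n)^2 = N^2 + 2\sum_{n > 0} r(n)^2 = N^2 + 2\Vert f \Vert_2^2,
\]
so $\Vert f \Vert_2^2 \asymp E(A)$ (using the trivial lower bound $E(A) \geq N^2$; the case $E(A) \asymp N^2$ can be treated separately if necessary, as it makes the claimed bound weak). Moreover $\Vert f \Vert_1 = \sum_{n > 0} r(n) \leq N^2/2$, so $\log \Vert f \Vert_1 \asymp \log N$ and $\log \log \Vert f \Vert_1 \asymp \log \log N$.

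Next I exhibit a permissible $K$ by invoking Theorem \ref{Theorem multiplicative energy}:
\[
\sum_{kl = mn} f(k) f(l) \overline{f(m) f(n)} \leq \sum_{kl = mn} r(k) r(l) r(m) r(n) \ll N^6 \log N.
\]
Dividing by $\Vert f \Vert_2^4 \asymp E(A)^2$ suggests taking $K \asymp N^6 (\log N) / E(A)^2$. Since $E(A) \leq N^3$, this $K$ is at least of order $\log N$, which exceeds $\log \Vert f \Vert_1$ up to constants, so the hypotheses of the $\alpha = 1/2$ case of Theorem \ref{Theorem general bound off critical line} are satisfied (after possibly inflating $K$ by a constant).

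Finally I substitute into the conclusion of Theorem \ref{Theorem general bound off critical line}. One has
\[
\log K = \log\!\left( \frac{N^6 \log N}{E(A)^2} \right) = 2 \log\!\left(\frac{N^3}{E(A)}\right) + \log \log N,
\]
so $\log K \log \log \Vert f \Vert_1 \ll \log(N^3/E(A)) \log \log N + (\log \log N)^2$; the second term contributes only $\exp(O(\log \log N)) = (\log N)^{O(1)}$ to the exponential factor. The leading $1/\log \Vert f \Vert_1 \asymp 1/\log N$ in front of the exponential, together with the $\log N$ arising from implicit constants, is absorbed into the $(\log N)^{O(1)}$ factor of the statement. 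This yields
\[
\sum_{\substack{n_i,n_j \in A-A \\ n_i,n_j > 0}} \frac{(n_i,n_j)}{\sqrt{n_i n_j}} r(n_i) r(n_j) \ll (\log N)^{O(1)} \exp\!\left( O\!\left( \left(\log \tfrac{N^3}{E(A)} \log \log N \right)^{1/2} \right) \right) E(A),
\]
as required. There is no substantive obstacle here; the proof is purely bookkeeping, and the only mild subtlety is verifying that the hypothesis $K \geq \log \Vert f \Vert_1$ is satisfied across the whole range of $E(A)$ and that the degenerate cases ($E(A)$ close to $N^2$ or close to $N^3$) do not cause trouble.
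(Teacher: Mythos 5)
Your proposal is correct and matches the paper's proof essentially line for line: both apply the $\alpha=1/2$ case of Theorem \ref{Theorem general bound off critical line} to $f(n)=r(n)1_{n>0}$, invoke Theorem \ref{Theorem multiplicative energy} to justify taking $K\asymp (\log N)(N^3/E(A))^2$, note that $\Vert f\Vert_1 = N(N-1)/2$ so that $K\geqslant \log\Vert f\Vert_1$, and then substitute. Your additional bookkeeping (checking $\Vert f\Vert_2^2 \asymp E(A)$, expanding $\log K$) is sound; in fact $\Vert f\Vert_2^2 = (E(A)-N^2)/2 \asymp E(A)$ always, since $E(A)\geqslant 2N^2-N$ for any $N$-element set, so the ``degenerate case'' you flag never arises.
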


\begin{proof}
We apply the third case of Theorem \ref{Theorem general bound off critical line} to the function $f(n) := r(n)1_{\mathbb{N}}(n)$, and with $K: = C (\log N) (\tfrac{N^3}{E(A)})^2$ for some large constant $C$. Theorem \ref{Theorem multiplicative energy} then implies that condition (\ref{multiplicative energy bound}) is satisfied. Furthermore $\Vert f\Vert_1 = \tfrac{N(N-1)}{2}$, and so $K \geqslant \log \Vert f\Vert_1$. The result then follows. 
\end{proof}

Theorem \ref{Theorem half line} improves over the general bound in Theorem \ref{Theorem Breteche Tenenbaum} in the cases where $E(A) \geqslant N^{3-o(1)}$. We will use this to improve upon a result of Aistleitner-Larcher-Lewko concerning the metric poissonian property, a notion from metric number theory that we will now briefly introduce. For more on this property, see \cite{ALL16, BCGW17, W18}. 

\begin{Definition}
\label{Def metric poiss}
If $\mathcal{A}$ is an increasing sequence of natural numbers, for $\alpha \in [0,1]$ and $s>0$ define \[ F(\alpha,s,N,\mathcal{A}):= \frac{1}{N} \sum\limits_{\substack{x_i,x_j \in A_N \, x_i \neq x_j \\ \Vert \alpha (x_i - x_j) \Vert \leqslant s/N}}1,\]
where $A_N$ is the truncation of $\mathcal{A}$ to the first $N$ elements. We say that $\mathcal{A}$ is \emph{metric poissonian} if for almost all $\alpha \in [0,1]$, for all $s>0$ \[ F(\alpha,s,N,\mathcal{A}) \rightarrow 2s \] as $N\rightarrow \infty$. 
\end{Definition}
\noindent The metric poissonian property is a strong notion of equidistribution for dilates of sequences, motivated by certain concerns in quantum physics (see \cite{RS98}).

We prove the following theorem, continuing the line of work \cite{ALL16, W18, BCGW17, ALT18} that investigates the relationship between the metric poissonian property and the notion of additive energy.  

\begin{Theorem}
\label{Theorem better energy bound for metric poiss}
There exists an absolute positive constant $C$ such that the following is true. Let $\mathcal{A}$ be a sequence of natural numbers, with truncations $A_N$, and suppose that \[E(A_N) \ll N^3/(\log N)^C.\] Then $\mathcal{A}$ is metric poissonian. 
\end{Theorem}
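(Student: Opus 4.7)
The plan is to use the standard variance (second-moment) approach of Aistleitner-Larcher-Lewko \cite{ALL16}: Parseval converts the $L^2$ mass in $\alpha$ of the discrepancy $F(\alpha,s,N,\mathcal{A})-2s$ into a GCD sum of the shape handled by Theorem \ref{Theorem half line}, and this variance bound is then upgraded to almost-everywhere convergence via a Borel-Cantelli argument along a carefully chosen subsequence.

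The first step is the Fourier reduction. Letting $\chi$ denote the indicator of $\{\beta\in\mathbb{T}:\Vert\beta\Vert\leq s/N\}$, whose Fourier coefficients satisfy $\vert\hat{\chi}(k)\vert\leq \min(2s/N,1/|k|)$, one expands and obtains
\[
F(\alpha,s,N,\mathcal{A})-2s = \frac{1}{N}\sum_{k\neq 0}\hat{\chi}(k)\sum_{\substack{n\in A_N-A_N\\ n\neq 0}} r_{A_N}(n)\,e(k\alpha n) + O(1/N).
\]
Squaring and integrating in $\alpha$ retains only the quadruples $(k,n,k',n')$ with $kn=k'n'$. Parametrising these by $d=(n,n')$ and an auxiliary integer $t\neq 0$, and estimating the inner sum over $t$ by splitting at the threshold where $\min(2s/N,1/|k|)$ changes regime, one arrives (up to $(\log N)^{O(1)}$ factors) at
\[
\int_0^1 \vert F(\alpha,s,N,\mathcal{A})-2s\vert^2\,d\alpha \ll \frac{s(\log N)^{O(1)}}{N^3}\sum_{\substack{n,n'\in A_N-A_N\\ n,n'>0}} r_{A_N}(n)r_{A_N}(n')\,\frac{(n,n')}{\sqrt{nn'}}.
\]
This is essentially the computation already in \cite{ALL16}; the new input enters next.

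Applying Theorem \ref{Theorem half line}, and writing $X=\log(N^3/E(A_N))$ (so that $N^2\leq E(A_N)\ll N^3/(\log N)^C$ forces $C\log\log N\leq X\leq 3\log N$), the GCD sum above is at most
\[
(\log N)^{O(1)}\exp\bigl(O(\sqrt{X\log\log N})\bigr)\cdot E(A_N) \ll N^3(\log N)^{O(1)}\exp\bigl(O(\sqrt{X\log\log N})-X\bigr).
\]
Since the function $X\mapsto O(\sqrt{X\log\log N})-X$ is decreasing for $X\geq O(\log\log N)$, the worst case is $X=C\log\log N$, yielding the factor $\exp((O(\sqrt{C})-C)\log\log N)=(\log N)^{O(\sqrt{C})-C}$. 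Choosing the absolute constant $C$ sufficiently large, the variance is bounded by $s(\log N)^{-10}$, say.

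The final step is to convert this $L^2$ bound to almost-everywhere convergence. Fix the subsequence $N_j=\lfloor\exp(j^{1/2})\rfloor$, so that $N_{j+1}/N_j\to 1$ and $\sum_j(\log N_j)^{-10}<\infty$. Chebyshev and Borel-Cantelli then give $F(\alpha,s,N_j,\mathcal{A})\to 2s$ a.s.\ for each rational $s>0$; a countable intersection covers all rational $s>0$, and the monotonicity of $F$ in $s$ extends this to all real $s>0$. For $N\in[N_j,N_{j+1}]$, the inclusions $A_{N_j}\subset A_N\subset A_{N_{j+1}}$ yield the sandwich
\[
\frac{N_j}{N} F\!\left(\alpha,\tfrac{sN_j}{N},N_j,\mathcal{A}\right) \leq F(\alpha,s,N,\mathcal{A}) \leq \frac{N_{j+1}}{N} F\!\left(\alpha,\tfrac{sN_{j+1}}{N},N_{j+1},\mathcal{A}\right),
\]
and both bounding ratios tend to $1$, so the middle is pinched against $2s$. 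The main obstacle is really the Parseval/variance bookkeeping in the first step, but this is carried out in \cite{ALL16} and requires no essentially new ideas; the substantive gain is the insertion of Theorem \ref{Theorem half line} in place of the Bondarenko-Seip bound, which is what permits the hypothesis to be weakened from $E(A_N)\ll N^{3-\epsilon}$ to $E(A_N)\ll N^3/(\log N)^C$.
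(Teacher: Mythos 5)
Your proposal is correct and follows essentially the same route as the paper: reduce the variance of $F$ to a G\'{a}l-type sum via the Fourier/Parseval computation of Aistleitner--Larcher--Lewko, insert Theorem~\ref{Theorem half line} to bound that sum, optimise in $X=\log(N^3/E(A_N))$ to beat a fixed power of $\log N$ by taking $C$ large, and conclude by Chebyshev, Borel--Cantelli along a sparse subsequence, and the sandwiching argument. The only cosmetic difference is the choice of subsequence: the paper takes $N_j=\lfloor e^{\eta j}\rfloor$ with a small fixed $\eta$ (paying an $O_s(\eta)$ error that is then absorbed by intersecting over countably many $\varepsilon$), whereas you take $N_j=\lfloor \exp(j^{1/2})\rfloor$ so that $N_{j+1}/N_j\to 1$ automatically and the $\varepsilon$-intersection is unnecessary; this costs a slightly larger exponent on $\log N$ in the variance bound (and hence a slightly larger $C$), which is harmless since $C$ is allowed to be any absolute constant.
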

This improves over the result of Theorem 1 of \cite{ALL16}, in which the same conclusion was shown to hold under the stronger hypotheses $E(A_N)\ll N^{3-\delta}$ (for some positive $\delta$). \\

We make no attempt to optimise the value of $C$ in Theorem \ref{Theorem better energy bound for metric poiss}. We have previously speculated that any value of $C$ greater than $1$ should suffice (Fundamental question, \cite{BCGW17}), and this could still be true (it is not precluded by the counterexample constructed in \cite{ALT18}). Unfortunately it does not seem as if the method presented in this paper will be able to prove this result, at least not without substantial modification. 

In \cite{BCGW17} we showed, under some additional and rather stringent density assumptions, that any $C>2$ suffices: in Theorem \ref{Theorem better energy bound for metric poiss} we recover a bound of the same shape without any density assumptions. This enables us to prove the following corollary, answering a question from \cite{ALL16}. 
\begin{Corollary}
\label{Corollary for subsets of squares}
Let $\mathcal{A}$ be an arbitrary infinite subset of the squares. Then $\mathcal{A}$ is metric poissonian.  
\end{Corollary}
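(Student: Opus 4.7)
The plan is to invoke Theorem \ref{Theorem better energy bound for metric poiss}, which reduces the problem to showing that the truncation $A_N$ of any infinite subset $\mathcal{A}$ of the squares satisfies $E(A_N) \ll N^3/(\log N)^C$, where $C$ is the absolute constant provided by that theorem.

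Writing $A_N = \{s^2 : s \in S\}$ with $|S| = N$, the representation function satisfies
\[ r_{A_N}(n) := |\{(s,t) \in S^2 : s^2 - t^2 = n\}| \le d(n), \]
where $d$ is the classical divisor function, since each such pair gives a factorization $n = (s-t)(s+t)$ of $n$ into factors of the same parity. Combined with the trivial bound $r_{A_N}(n) \le N$, this gives, for any threshold $K$,
\[ E(A_N) = \sum_n r_{A_N}(n)^2 \le \sum_n r_{A_N}(n) \min(d(n), N) \le KN^2 + N \cdot T_K, \]
where $T_K := |\{(s,t) \in S^2 : d(s^2 - t^2) > K\}|$. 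Choosing $K \asymp N/(\log N)^C$ makes the first term of size $N^3/(\log N)^C$, and the remaining task is to show $T_K \ll N^2/(\log N)^C$.

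To control $T_K$, the plan is to use the submultiplicativity $d(s^2 - t^2) \le d(s-t)\, d(s+t)$ together with Markov's inequality at a sufficiently high moment $\ell$, reducing $T_K$ to sums of the form $\sum_{(s,t)\in S^2} d(s-t)^{2\ell}$. Splitting by the value of the difference and invoking the classical moment bound $\sum_{m \le x} d(m)^{2\ell} \ll_\ell x (\log x)^{O_\ell(1)}$ will then deliver the required estimate.

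The main obstacle is the regime in which $\max S$ is much larger than $N$: the divisor-moment bound scales with the range of integration, which here may be astronomically large. In this sparse regime, however, pairs $(s,t) \in S^2$ with $d(s^2-t^2)$ large are forced, via the factorization $n = (s-t)(s+t)$, to lie in sparse arithmetic progressions whose intersection with $S$ is itself controlled; combining this with the trivial bound $r_{A_N} \le N$ on the tail produces the needed saving. It is precisely this regime that escapes the $N^{-\delta}$-style hypothesis of \cite{ALL16}, and is accommodated by the soft logarithmic loss permitted by Theorem \ref{Theorem better energy bound for metric poiss}.
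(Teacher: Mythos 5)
Your proposal takes a fundamentally different route from the paper, and it contains a genuine gap. The paper's proof is a single citation: it invokes Sanders' theorem (Theorem~11.7 of \cite{S12}), which states that any set $A_N$ of $N$ perfect squares satisfies $E(A_N) \ll N^3\exp(-c_1\log^{c_2}N)$, comfortably below the $N^3/(\log N)^C$ threshold of Theorem~\ref{Theorem better energy bound for metric poiss}. That result rests on heavy additive-combinatorial machinery (quantitative Freiman/Balog--Szemer\'edi--Gowers input, together with the fact that squares contain no large generalised arithmetic progressions). You are instead attempting to re-derive an energy bound for sets of squares from elementary divisor estimates, and that cannot be made to work along the lines you sketch.

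The difficulty you flag in the sparse regime is not a technical nuisance to be smoothed over; it is fatal to the divisor-function framework. The pointwise bound $r_{A_N}(n)\le d(n)$ is hopelessly lossy when $\max S$ is unbounded in terms of $N$. Take $S=\{a+jq : 1\le j\le N\}$ with $q=\operatorname{lcm}(1,\dots,L)$ and $a$ enormous: then $d(s-t)\ge d(q)\ge 2^L$ for \emph{every} pair $s\ne t$, so with $K\asymp N/(\log N)^C$ and $L$ large one gets $T_K=N(N-1)$, and your decomposition $E(A_N)\le KN^2+N\cdot T_K$ returns only the trivial $E(A_N)\ll N^3$. Meanwhile the genuine energy of $\{(a+jq)^2\}$ is only $O(N^2)$ for generic large $a$, since $(j_1-k_1)(2a+(j_1+k_1)q)=(j_2-k_2)(2a+(j_2+k_2)q)$ forces $(j_1,k_1)=(j_2,k_2)$ once $a\gg qN^2$. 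So the obstacle is not that divisor moments are hard to estimate over a long range; it is that the quantity you are bounding, $\sum_n r(n)\min(d(n),N)$, can genuinely be $\asymp N^3$ even when $E(A_N)$ is tiny. Your final paragraph, asserting that pairs with large $d(s^2-t^2)$ ``lie in sparse arithmetic progressions whose intersection with $S$ is itself controlled,'' is exactly what fails here: nothing prevents $S$ from lying entirely inside one such progression, and no amount of dyadic manipulation of divisor moments will recover the saving. Recovering it is essentially the content of Sanders' theorem, and the correct move is simply to cite it.
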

\begin{proof}
This follows directly from Theorem \ref{Theorem better energy bound for metric poiss} and the result of Sanders (Theorem 11.7 of \cite{S12}) that if $A_N$ is a set of $N$ squares then $E(A_N) \ll N^3 \exp(-c_1 \log^{c_2} N)$ for some absolute positive constants $c_1$ and $c_2$. 
\end{proof}

\textbf{Notation}: We use the standard Bachmann-Landau asymptotic notation, as well as the Vinogradov symbol $\ll$. Unlike some authors, we use $f \ll g$ to mean that there exists some constant $C$ for which $\vert f(n)\vert \leqslant C \vert g(n)\vert$ for \emph{all} natural numbers $n$, and not just for $n$ sufficiently large. The symbol $f \asymp g$ means that both $f \ll g $ and $g\ll f$ hold. 

\section{Proofs}

The proof of Theorem \ref{Theorem general bound off critical line} will involve the following random model for the zeta function. Let $\{X(p): p \text{ prime} \}$ be a collection of independent random variables, each uniformly distributed on $S^1$. For every $n\in \mathbb{N}$ define $X(n): = \prod\limits_{p^a\Vert n}X(p)^a$. Then define \[\zeta_X(\alpha):= \sum\limits_{n\in\mathbb{N}}\frac{X(n)}{n^\alpha}.\] For fixed $\alpha > 1/2$, this series converges with probability $1$. We will use the following moment estimates from \cite{LR17}.

\begin{Lemma}
\label{Lemma Lewko Radziwill}
When $\zeta_X(\alpha)$ is defined as above, and $l$ is a real number, we have 
\begin{equation}
\log\E\vert \zeta_X(\alpha) \vert ^{2l} \ll \begin{cases} 
l\log\log l & \alpha = 1 \\
C_\alpha l^{1/\alpha}(\log l)^{-1} & 1/2 < \alpha < 1 \\
l^2 \log (\alpha - 1/2)& 1/2 < \alpha \end{cases}
\end{equation}
for some positive constant $C_\alpha$. The first two cases hold provided $l\geqslant 3$, and the final case holds provided $l\geqslant 1$.
\end{Lemma}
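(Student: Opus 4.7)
The plan is to reduce the moment computation to a product of local integrals by exploiting the random Euler product. Since $X$ is completely multiplicative by construction, we have almost surely (for $\alpha > 1/2$)
\[
\zeta_X(\alpha) = \prod_p \bigl(1 - X(p)/p^\alpha\bigr)^{-1},
\]
and by independence of the $\{X(p)\}$ the absolute moment factorises as
\[
\E|\zeta_X(\alpha)|^{2l} = \prod_p I_p(l,\alpha), \qquad I_p(l,\alpha):=\frac{1}{2\pi}\int_0^{2\pi}\bigl(1 - 2p^{-\alpha}\cos\theta + p^{-2\alpha}\bigr)^{-l}\,d\theta.
\]
Taking logarithms, the task reduces to estimating $\sum_p \log I_p(l,\alpha)$.

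The heart of the argument is a two-regime estimate for the local factor about the natural cut-off $p \asymp l^{1/\alpha}$. For ``large'' primes $p\gg l^{1/\alpha}$, the quantity $lp^{-\alpha}$ is small, so a Taylor expansion (or equivalently, for integer $l$, the Parseval identity $I_p(l,\alpha)=\sum_{k\geq 0}\binom{l+k-1}{k}^2 p^{-2k\alpha}$) yields $\log I_p(l,\alpha) \ll l^2 p^{-2\alpha}$. For ``small'' primes $p\ll l^{1/\alpha}$, the integrand is sharply peaked at $\theta=0$, where it attains the value $(1-p^{-\alpha})^{-2l}$; Laplace's method applied to the local expansion $1 - 2p^{-\alpha}\cos\theta + p^{-2\alpha} \asymp (1-p^{-\alpha})^2 + p^{-\alpha}\theta^2$ then gives $\log I_p(l,\alpha) = -2l\log(1-p^{-\alpha}) + O(\log l)$, whose leading part is $\asymp l p^{-\alpha}$.

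Summing these contributions over primes yields the three stated bounds. For $\alpha=1$, Mertens' theorem gives $2l\sum_{p\leq l}p^{-1}\sim 2l\log\log l$, which dominates the negligible tail $l^2\sum_{p>l}p^{-2}=O(l/\log l)$. For $1/2<\alpha<1$, partial summation combined with the prime number theorem shows that both $l\sum_{p\leq l^{1/\alpha}}p^{-\alpha}$ and $l^2\sum_{p>l^{1/\alpha}}p^{-2\alpha}$ are of order $l^{1/\alpha}/\log l$, with implicit constant blowing up as $\alpha\downarrow 1/2$. For the universal third bound, one applies only the Taylor estimate, controlling the total sum by $l^2\sum_p p^{-2\alpha}\asymp l^2\log(1/(2\alpha-1))$ via a Mertens-type estimate; since no Laplace analysis is needed, this bound is valid whenever $l\geq 1$. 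Extension from integer to real $l$ then follows from log-convexity of $l\mapsto\log\E|\zeta_X(\alpha)|^{2l}$, an immediate consequence of H\"older's inequality.

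The principal obstacle is uniformity in the Laplace step: the implicit constants in the approximation of $I_p(l,\alpha)$ must be controlled uniformly over all relevant $p$ and $l$, and the two regimes must be spliced cleanly at $p\asymp l^{1/\alpha}$, where neither asymptotic is individually sharp. The cleanest route is to prove the bound first for integer $l$ using the exact Parseval identity throughout (so that the only analytic input is the asymptotic size of $\binom{l+k-1}{k}^2 p^{-2k\alpha}$), and interpolate only at the end.
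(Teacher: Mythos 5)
The paper itself does not prove this lemma: it simply cites Lemma~6 of Lewko--Radziwi\l\l\ \cite{LR17} and remarks that the stated ranges of $l$ can be read off from their argument. Your reconstruction follows the same route that \cite{LR17} take --- factor the random Euler product over primes, use independence to write $\E|\zeta_X(\alpha)|^{2l}=\prod_p I_p(l,\alpha)$, estimate the local factor in two regimes split at $p\asymp l^{1/\alpha}$, and sum using Mertens/PNT --- so the strategy is essentially the correct one and matches the cited source.

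There is, however, a bookkeeping gap in the small-prime regime that you should repair. For $1/2<\alpha<1$ you record $\log I_p(l,\alpha) = -2l\log(1-p^{-\alpha}) + O(\log l)$ and then implicitly drop the error when summing. But summing $O(\log l)$ over the $\pi(l^{1/\alpha})\asymp \alpha\, l^{1/\alpha}/\log l$ primes below the cutoff gives an error of order $l^{1/\alpha}$, which is a full $\log l$ factor larger than the target $l^{1/\alpha}/\log l$. The cleanest fix is to observe that for the desired \emph{upper} bound no Laplace error term is needed at all: writing $t_k=\binom{l+k-1}{k}p^{-k\alpha}\geq 0$, one has
\[
I_p(l,\alpha)=\sum_{k\geq 0}t_k^2\ \leq\ \Bigl(\sum_{k\geq 0}t_k\Bigr)^2=(1-p^{-\alpha})^{-2l},
\]
so that $\log I_p(l,\alpha)\leq -2l\log(1-p^{-\alpha})\ll l p^{-\alpha}$ with an absolute constant, and then $l\sum_{p\leq l^{1/\alpha}}p^{-\alpha}\ll l^{1/\alpha}/\log l$ exactly as you say. (Alternatively, one can sharpen the Laplace error to $O(\log(2+lp^{-\alpha}))$; its sum over $p\leq l^{1/\alpha}$ is then acceptable, but this requires tracking how the width of the saddle depends on $p$, which your sketch does not do.) A similar remark applies to the third, universal, bound: for small primes $p\leq l^{1/\alpha}$ the ``Taylor estimate'' $\log I_p\ll l^2p^{-2\alpha}$ does not follow from a direct expansion, and one should again use $\log I_p\leq -2l\log(1-p^{-\alpha})\ll l$, which is $\ll l^2 p^{-2\alpha}$ for $l\geq 1$ since $p^{-2\alpha}\gg 1$ on the finitely many offending primes. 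Finally, the interpolation step via H\"older is fine but unnecessary: the Parseval identity $I_p(l,\alpha)=\sum_k\binom{l+k-1}{k}^2p^{-2k\alpha}$ holds verbatim for all real $l\geq 1$ via the binomial series, so the whole argument can be run directly for real $l$.
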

\begin{proof}
This is Lemma 6 of \cite{LR17}. The range of uniformity in $l$ is not specified there, but it is quickly seen from their argument that the above ranges of $l$ are acceptable. 
\end{proof}

\begin{proof}[Proof of Theorem \ref{Theorem general bound off critical line}]
We use the ideas of Lewko and Radziwill from \cite{LR17}. Let \[ D(X) : = \sum\limits_{n \in \mathbb{N}} f(n) X(n) .\] Then on the one hand 
\begin{align*}
\mathbb{E} (\vert \zeta_X(\alpha) D(X)\vert^2 )
&= 
\sum_{n_1,n_2,m_1,m_2}(n_1n_2)^{-\alpha}f(m_1)\overline{f(m_2)}1_{n_1m_1=n_2m_2}\\
&=
\sum_n\Big\lvert\sum_{m\mid n}f(m)(m/n)^\alpha\Big\rvert^2\\
&=
\zeta(2\alpha) \sum\limits_{a,b} f(a) \overline{f(b)} \frac{(a,b)^{2\alpha}}{(ab)^\alpha}.
\end{align*}
On the other hand, by splitting the expectation according to whether $\lvert \zeta_X(\alpha)\rvert<V$, and using the identity $\mathbb{E}(\lvert D(X)\rvert^2)=\Vert f\Vert_2^2$,  for all positive $l$ and $V$ we have 
\begin{equation}
\label{splitting expectation}
\mathbb{E} (\vert \zeta_X(\alpha) D(X)\vert^2) \leqslant V^2\Vert f\Vert_2^2 + V^{-2l} \mathbb{E} (\vert \zeta_X(\alpha)\vert ^{2l + 2} \lvert D(X)\rvert^2).
\end{equation} Now our approach differs from \cite{LR17}. Instead of removing $D(X)$ from the second summand using an $L^\infty$ bound, we use the Cauchy-Schwarz inequality. This shows that (\ref{splitting expectation}) is at most
\begin{equation}
\label{changing to fourth moment}
V^2 \Vert f\Vert_2^2 + V^{-2l}\mathbb{E} (\vert D(X)\vert^4)^{\frac{1}{2}} \mathbb{E} ( \vert \zeta_{X}(\alpha)\vert^{4l+4})^{\frac{1}{2}}.
\end{equation}

Expanding out the expectation we see that \[\mathbb{E} (\vert D(X)\vert^4) = \sum\limits_{ab=cd} f(a)f(b) \overline{f(c)f(d)}\leq K\| f\|_2^4.\]
Suppose first that $\alpha = 1$. Then, by Lemma \ref{Lemma Lewko Radziwill}, the bound in (\ref{changing to fourth moment}) is \[\ll \Vert f\Vert_2^2(V^2  + V^{-2l}K^{\frac{1}{2}}\exp(O(l \log\log l))).\] 
Choosing $V = (\log l)^C$, with $C$ large enough, and $l=\log K+3$, this is 
\[ \ll(\log \log K)^{O(1)} \Vert f\Vert_2^2\] as claimed. (Note that, since $K \geqslant 3$ by assumption, $\log l \ll \log \log K$).

When $1/2 <\alpha < 1$, (\ref{changing to fourth moment}) enjoys the bound \[\Vert f\Vert_2^2(V^2  + V^{-2l}K^{\frac{1}{2}}\exp(O_\alpha(l^{1/\alpha} (\log l)^{-1}))).\] Picking $V = \exp(C_\alpha l^{-1 + 1/\alpha}(\log l)^{-1}))$, with $C_\alpha$ large enough, and $l = (\log K)^\alpha (\log \log K)^{\alpha} + 3$ yields the result.

It remains to tackle the case $\alpha=1/2$, which we do by interpolating from $\alpha>1/2$. More precisely, let $\alpha=1/2+1/\log \| f\|_1.$ By H\"{o}lder's inequality
\[\sum_{a,b}\frac{(a,b)}{\sqrt{ab}}f(a)\overline{f(b)}\ll \left(\sum_{a,b}\frac{(a,b)^{2\alpha}}{(ab)^\alpha}f(a)\overline{f(b)}\right)^{1/2\alpha}.\]
Using the estimate $\zeta(s)=(s-1)^{-1}+O(1)$ and (\ref{changing to fourth moment}) gives
\[\sum_{a,b}\frac{(a,b)^{2\alpha}}{(ab)^\alpha}f(a)f(b)\ll  (\log \| f\|_1 + O(1))^{-1}\| f\|_2^2( V^2+V^{-2l}K^{1/2}\exp(O(l^2\log\log \|f\|_1))).\]
We now choose $V=\exp(Cl(\log\log\|f\|_1))$, with $C$ large enough, and \\$l=(\log K)^{1/2}(\log \log \|f\|_1)^{-1/2}$. Note that the assumption $K\geqslant \log \Vert f\Vert_1$ implies that $l\geqslant 1$, and so our application of Lemma \ref{Lemma Lewko Radziwill} was valid. The result then follows.  
\end{proof}

\begin{proof}[Proof of Theorem \ref{Theorem better energy bound for metric poiss}]
Fixing two positive values $s$ and $\varepsilon$, it will be enough to show that for almost all $\alpha$ there exists a natural number $N(\alpha,s,\varepsilon)$ such that 
\begin{equation}
\label{desired bound}
\vert F(\alpha,s,N,\mathcal{A}) - 2s\vert < \varepsilon
\end{equation} for every $N> N(s,\alpha,\varepsilon)$. (The theorem then follows by considering all rational values of $s$ and $\varepsilon$: this argument is also used to conclude section 6 of \cite{BCGW17}).

We have the following lemma of Aistleitner-Larcher-Lewko, which is essentially Lemma 3 of \cite{ALL16}. 
\begin{Lemma}
If $s \asymp 1$ then
\[\operatorname{Var}(F,N) : = \int\limits_{0}^1 \vert F(\alpha,s,N,\mathcal{A}) - 2s \vert^2 \, d\alpha \ll \frac{ \log N}{N^3} \sum\limits_{\substack{n_i,n_j \in A_N - A_N \\ n_i,n_j >0}} \frac{(n_i,n_j)}{\sqrt{n_in_j}} r(n_i)r(n_j).\]
\end{Lemma}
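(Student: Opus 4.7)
The plan is to expand $F$ as a Fourier sum and then apply Parseval. Writing $\psi_s(x) := 1_{\|x\|\leq s/N}$ on $\mathbb{R}/\mathbb{Z}$, and using the representation function $r=r_{A_N}$, we have
\[F(\alpha,s,N,\mathcal{A}) = \frac{1}{N}\sum_{n\neq 0} r(n)\,\psi_s(\alpha n).\]
Its mean is $\int_0^1 F\,d\alpha = 2s(N-1)/N$, which differs from $2s$ by $O(s/N)$, contributing only $O(s^2/N^2)$ to $\int|F-2s|^2\,d\alpha$. Up to this harmless term, it suffices to bound the genuine variance $\int |F - \int F|^2 d\alpha$.

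Next, I Fourier expand $\psi_s(x)=\sum_k \hat\psi_s(k)e(kx)$, noting the standard estimates $\hat\psi_s(0)=2s/N$, $\hat\psi_s$ is real and even, and $|\hat\psi_s(k)|\leq \min(2s/N,\,1/(\pi|k|))$. Subtracting the mean kills the $k=0$ term, so
\[F(\alpha)-\tfrac{2s(N-1)}{N} = \frac{1}{N}\sum_{n\neq 0}r(n)\sum_{k\neq 0}\hat\psi_s(k)e(k\alpha n).\]
Squaring and integrating, the cross terms vanish unless $k_1 n_1 + k_2 n_2=0$. For fixed non-zero $n_1,n_2$ with $d=(|n_1|,|n_2|)$, $|n_1|=da$, $|n_2|=db$, $(a,b)=1$, this reduces the integral to $\sum_{t\neq 0}\hat\psi_s(at)\hat\psi_s(bt)$ (the signs cancel because $\psi_s$ is even). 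Thus, using $r(-n)=r(n)$,
\[\operatorname{Var}(F,N) \ll \frac{1}{N^2}\sum_{n_1,n_2>0}r(n_1)r(n_2)\,T(n_1,n_2), \qquad T(n_1,n_2):=\Bigl|\sum_{t\neq 0}\hat\psi_s(at)\hat\psi_s(bt)\Bigr|.\]

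The key estimate is then $T(n_1,n_2)\ll \tfrac{\log N}{N}\cdot\tfrac{(n_1,n_2)}{\sqrt{n_1 n_2}}$, which by WLOG $a\leq b$ reduces to proving $T\ll (\log N)/(Nb)$, since $1/b\leq 1/\sqrt{ab}$. This I will establish by splitting the sum at the threshold $K:=N/s\asymp N$, using $|\hat\psi_s(k)|\leq 2s/N$ for $|k|\leq K$ and $|\hat\psi_s(k)|\ll 1/|k|$ beyond: the ``both small'' range $|t|\leq K/b$ contributes $O(s/(Nb))$, the ``mixed'' range $K/b<|t|\leq K/a$ contributes $O((s/(Nb))\log(b/a))$, and the ``both large'' range $|t|>K/a$ contributes $O(1/(bK))=O(s/(Nb))$. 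Crucially $\log(b/a)\leq \log K\ll \log N$ regardless of the sizes of $a,b$, because the middle range is empty unless $b\leq K$. (When $a>K$ only the last range is non-empty and $\sqrt{ab}\geq a>N/s$ absorbs the missing denominator directly.) Combining these estimates and using $s\asymp 1$ yields the claimed bound.

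The only mildly delicate point is confirming that the $\log(b/a)$ factor from the mixed range really is bounded by $\log N$ and not by $\log\max A_N$; the observation above that the mixed range is empty whenever $b>K=N/s$ is what rescues this, and is what I expect to be the most easily-missed step.
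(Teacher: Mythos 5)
Your proof is correct, and it takes the same Fourier/Parseval route as Lemma~3 of \cite{ALL16}, which the paper cites without reproducing: expand $F$ over $n$, Fourier-expand the indicator $\psi_s$, integrate to isolate the diagonal $k_1n_1=k_2n_2$, parametrise the solutions as $(k_1,k_2)=(bt,at)$ using $(a,b)=1$, and bound the resulting one-parameter sum $T(n_1,n_2)$ by splitting at $|k|\asymp N/s$. The reduction to $T\ll(\log N)/(Nb)$ via $1/b\leq 1/\sqrt{ab}$, the three-range split, and the closing remarks on the $O(s^2/N^2)$ mean-shift being negligible are all sound.

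One small imprecision in the justification of the ``mixed'' range: the claim that ``the middle range is empty unless $b\leq K$'' is not right. The middle range $K/b<|t|\leq K/a$ is non-empty over the integers whenever $a\leq K$, irrespective of the size of $b$. The correct reason the bound survives is that $t$ runs over \emph{non-zero integers}, so when $K/b<1$ the effective lower cutoff is $|t|\geq 1$ rather than $|t|>K/b$, and the harmonic sum is then $\sum_{1\leq|t|\leq K/a}|t|^{-1}\ll\log(K/a)+1\ll\log K\ll\log N$. So the factor is $\ll\log K$ regardless of $b$ — same conclusion, different mechanism. Everything else is fine.
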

\noindent Applying Theorem \ref{Theorem half line}, this implies that \[ \operatorname{Var}(F,N) \ll  (\log N)^{O(1)}\exp(O( \log \tfrac{N^3}{E(A)}\log\log N)^{1/2}) \frac{E(A)}{N^3}.\] Therefore, assuming $\tfrac{E(A)}{N^3}\leq (\log N)^{-C}$ for some sufficiently large absolute constant $C>0$,
\[ \operatorname{Var}(F,N) \ll (\log N)^{-3/2}. \]

We may now conclude in a similar fashion as in \cite{ALL16} and \cite{BCGW17}. Indeed, let $\eta > 0$ be a small positive quantity to be chosen later, and for $j\in \mathbb{N}$ let $N_j : = \lfloor e^{\eta j} \rfloor$. With this choice, $ \sum_j \operatorname{Var}(F,N_j) < \infty$. By Chebyschev's inequality and the Borel-Cantelli Lemma, this implies that for almost all $\alpha$ there exists a value $j(\alpha,s,\varepsilon)$ such that for all $j\geqslant j(\alpha,s,\varepsilon)$ one has 
\begin{equation}
\label{equation j bound}
\vert F(\alpha,s\tfrac{N_{j}}{N_{j+1}},N_j,\mathcal{A}) - 2s\vert <  \varepsilon/2, \qquad \vert F(\alpha,s\tfrac{N_{j+1}}{N_j},N_j,\mathcal{A}) - 2s\vert < \varepsilon / 2.
\end{equation}
\noindent Let $N$ be large, and choose $j$ such that $N_j < N \leqslant N_{j+1}$. Then \[N_j F(\alpha,s\tfrac{N_{j}}{N_{j+1}},N_j,\mathcal{A}) \leqslant N F(\alpha,s,N,\mathcal{A})\leqslant N_{j+1} F(\alpha,s\tfrac{N_{j+1}}{N_j},N_j,\mathcal{A}). \] Using this sandwiching, and the fact that $N_{j+1}/N_j = 1 + O(\eta)$, one may establish from (\ref{equation j bound}) that, if $N$ is large enough so that $j \geqslant j(\alpha,s,\varepsilon)$, \[\vert F(\alpha,s,N,\mathcal{A}) - 2s\vert \leqslant \varepsilon/2 + O_s(\eta).\] If $\eta$ is small enough, (\ref{desired bound}) follows.
\end{proof}

\section{Sum-product estimates}
\label{section sum product}
This section is devoted to presenting a proof of Theorem \ref{Theorem multiplicative energy}. The first proof used the deep methods of Guth and Katz, but subsequently Murphy, Roche-Newton, and Shkredov gave a simpler proof using only the Szemer\'{e}di-Trotter theorem (see Theorem 2.1 of \cite{MRNS15}). Since the proof is short and elementary, but rather difficult to extract from \cite{MRNS15}, we include a proof here.

We will require the following simple consequence of the Szemer\'{e}di-Trotter theorem (Corollary 2.2 of \cite{MRNS15}).

\begin{Theorem}\label{sztr}
Let $t\geqslant 2$ be a parameter. If $L$ is a finite set of lines in $\mathbb{R}^2$ such that at most $O(\lvert L\rvert^{1/2})$ such lines intersect at any point then the number of points on more than $t$ lines is $O(\lvert L\rvert^2 /t^3)$.
\end{Theorem}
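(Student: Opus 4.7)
The plan is to apply the standard Szemer\'edi--Trotter incidence bound, which states that for any finite point set $P$ and finite line set $L$ in $\mathbb{R}^2$, the number of incidences satisfies $I(P, L) \ll (|P||L|)^{2/3} + |P| + |L|$. Write $m := |L|$ and let $P_t$ denote the set of points lying on more than $t$ lines of $L$. By hypothesis every point is on at most $C_0 m^{1/2}$ lines for some absolute constant $C_0$, so $P_t = \emptyset$ whenever $t > C_0 m^{1/2}$; it therefore suffices to treat the range $t \leq C_0 m^{1/2}$.

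In that range, I would start from the trivial double-counting inequality $t|P_t| \leq I(P_t, L)$ and combine it with Szemer\'edi--Trotter to obtain
\[ t|P_t| \ll (|P_t| m)^{2/3} + |P_t| + m. \]
For $t$ larger than twice the implicit constant the middle term is absorbed into the left-hand side; for $t$ in a bounded range the claim is trivial (say from $|P_t| \leq \binom{m}{2}$). Rearranging what remains yields $|P_t| \ll m^2/t^3 + m/t$, so it suffices to check that the second term is harmless. But in the regime $t \leq C_0 m^{1/2}$ we have $m/t = (m^2/t^3)(t^2/m) \leq C_0^2 \, (m^2/t^3)$, whence $|P_t| \ll m^2/t^3$ as required.

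I expect the only real obstacle to be conceptual rather than technical, namely recognising how the hypothesis on maximum multiplicity is actually used: without the assumption that at most $O(|L|^{1/2})$ lines meet at each point, the additive error term $|L|$ in Szemer\'edi--Trotter would contribute $|L|/t$, which dominates $|L|^2/t^3$ precisely when $t > |L|^{1/2}$. The hypothesis is engineered to make $P_t$ vanish in that very range, after which the remaining analysis reduces to routine case-splitting between the three terms in Szemer\'edi--Trotter.
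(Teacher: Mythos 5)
Your proof is correct, and it is the standard deduction of this incidence bound from the Szemer\'edi--Trotter theorem. Note, however, that the paper does not actually prove this statement: it is quoted directly from Corollary~2.2 of \cite{MRNS15}, so there is no in-paper proof to compare against. Your argument is exactly the one that lies behind that corollary, and you correctly identify the role of the multiplicity hypothesis, namely that it forces $P_t=\emptyset$ once $t\gg \lvert L\rvert^{1/2}$, which is precisely the regime in which the $\lvert L\rvert/t$ error from Szemer\'edi--Trotter would otherwise swamp $\lvert L\rvert^2/t^3$. The case-splitting (absorbing $\lvert P_t\rvert$ for $t$ large, and using $\lvert P_t\rvert\le\binom{\lvert L\rvert}{2}$ for $t$ bounded) is handled cleanly, so the proposal stands as a complete proof of the cited result.
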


We use this to prove the following result.
\begin{Lemma}
\label{Lemma application of szem trot}
For any finite sets $A,Z\subset \mathbb{R}$
\[\sum_{n,m}r(n)r(m)1_{n/m\in Z}\ll \lvert A\rvert^3\lvert Z\rvert^{1/2}.\]
\end{Lemma}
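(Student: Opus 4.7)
The plan is to interpret the sum as a count of incidences involving the grid $P := A \times A$, and then invoke Theorem \ref{sztr} via point-line duality. Writing $N := |A|$, the quantity $\sum_{n,m} r(n) r(m) 1_{n/m \in Z}$ counts quadruples $(a_1, b_1, a_2, b_2) \in A^4$ with $a_2 \neq b_2$ such that $a_1 - b_1 = z(a_2 - b_2)$ for some $z \in Z$; geometrically, this says that the points $(a_2, a_1)$ and $(b_2, b_1)$ of $P$ lie on a common (non-vertical) line of slope $z$. Thus, letting $L$ denote the collection of lines in $\mathbb{R}^2$ of slope in $Z$ which meet $P$, and writing $k_\ell := |\ell \cap P|$, it suffices to prove $\sum_{\ell \in L} k_\ell^2 \ll N^3 |Z|^{1/2}$.

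Next I would pass to the dual via the incidence-preserving correspondence $(a, b) \leftrightarrow \{y = ax - b\}$, which turns $P$ into a set $L^*$ of $N^2$ lines in the dual plane and turns $L$ into a set of points whose first coordinate lies in $Z$. The target becomes $\sum_q k_q^2 \ll N^3 |Z|^{1/2}$, where $q$ now ranges over the dual points and $k_q$ counts lines of $L^*$ through $q$. At any point $(x_0, y_0)$ in the dual plane the equation $b = ax_0 - y_0$ determines $b$ uniquely from $a \in A$, so at most $N = |L^*|^{1/2}$ lines of $L^*$ are concurrent at any point. Theorem \ref{sztr} then yields $|\{q : k_q > t\}| \ll N^4/t^3$ for all $t \geq 2$.

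The remainder is a dyadic balance. The total incidence count is $\sum_q k_q = |P| \cdot |Z| = N^2 |Z|$, since each point of $P$ lies on exactly one line of $L$ per slope. Truncating at $t^* := N/|Z|^{1/2}$, the contribution from $q$ with $k_q \leq t^*$ is at most $t^* \sum_q k_q = N^3 |Z|^{1/2}$, while the contribution from $k_q > t^*$ is, by a standard dyadic summation, $\ll N^4/t^* = N^3 |Z|^{1/2}$. The regime $|Z| \gtrsim N^2$ (where $t^* < 2$ and Theorem \ref{sztr} is not directly applicable) is handled by the trivial bound $N^4 \ll N^3 |Z|^{1/2}$. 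I expect the main obstacle to be identifying the dual setup so that the concurrency hypothesis of Theorem \ref{sztr} matches $|L^*|^{1/2}$ exactly: the Cartesian-product structure $P = A \times A$ is precisely what forces at most $|L^*|^{1/2}$ concurrent lines in the dual, and this is why one recovers the bound $N^3 |Z|^{1/2}$ rather than the weaker $N^{10/3} |Z|^{1/3}$ that a naive application of Szemer\'{e}di-Trotter would give.
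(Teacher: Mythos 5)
Your argument is correct and is essentially the same as the paper's proof, with the point--line duality made explicit: the paper's counting function $r(y,z) := \lvert\{(a,b)\in A^2 : a+bz=y\}\rvert$ is precisely your dual concurrency count $k_q$ (up to a sign convention), and both proofs finish with the identical application of Theorem~\ref{sztr} -- using the Cartesian-product structure of $A\times A$ to verify the $\lvert L\rvert^{1/2}$-concurrency hypothesis -- followed by the same dyadic balance at threshold $\asymp \lvert A\rvert\lvert Z\rvert^{-1/2}$. The only cosmetic difference is that you separately note the trivial regime $\lvert Z\rvert \geq \lvert A\rvert^2$, which the paper leaves implicit.
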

\begin{proof}
Let $r(y,z)$ count the number of $a,b\in A$ such that $a+bz=y$. Then 
\[\sum_{n,m}r(n)r(m)1_{n/m\in Z}=\sum_{z\in Z}\sum_y r(y,z)^2.\]
By Theorem~\ref{sztr} the number of $(y,z)$ such that $r(y,z)\geq t$ is $O( \lvert A\rvert^4/t^3)$. With $u$ some parameter to be chosen later, splitting the sum by whether $r(y,z)\geq u$ and dividing the large values dyadically, we have 
\begin{align*}
\sum_{z\in Z}\sum_{y}r(y,z)^2
&\ll u\lvert Z\rvert\lvert A\rvert^2 + \sum_{2^i\geq u}\frac{\lvert A\rvert^4}{2^{i}}\\
&\ll u\lvert Z\rvert\lvert A\rvert^2+	u^{-1}\lvert A\rvert^4.
\end{align*}
The lemma follows by choosing $u\asymp \lvert A\rvert\lvert Z\rvert^{-1/2}$.
\end{proof}

\begin{proof}[Proof of Theorem \ref{Theorem multiplicative energy}]
This follows from Lemma \ref{Lemma application of szem trot} following another dyadic decomposition. Indeed, since one has the trivial bound $r(k) \leqslant \vert A\vert$ for all $k$, \begin{align}
\label{dyadic decomp}
\sum\limits_{km=ln}r(k)r(l)r(m)r(n) &= \sum\limits_z \Big\vert \sum\limits_{n/m = z} r(n)r(m) \Big\vert^2 \nonumber\\
& = \sum\limits_{i=1}^{ \lceil 2\log \vert A\vert \rceil} \sum\limits_{ z \in Z_i} \Big\vert \sum\limits_{n/m = z} r(n)r(m) \Big\vert^2 ,
\end{align}
\noindent where \[ Z_i : = \{ z : e^{i-1}\leqslant \Big\vert \sum\limits_{n/m = z} r(n)r(m) \Big\vert < e^i\}.\] So (\ref{dyadic decomp}) is at most \[\sum\limits_{i=1}^{ \lceil 2\log \vert A\vert \rceil}\vert Z_i\vert e^{2i}.\] But applying Lemma \ref{Lemma application of szem trot} to $Z_i$ yields $\vert Z_i\vert \ll \vert A\vert^6 e^{-2i}.$ So \[\sum\limits_{km=ln}r(k)r(l)r(m)r(n) \ll \vert A\vert^6 \log \vert A\vert,\] as desired.
\end{proof}

\section{Final remarks}
Considering the improvements of Bondarenko and Seip on GCD sums with $\alpha = 1/2$, we do not think that Theorem \ref{Theorem half line} has the quantitatively optimal form. However, to offer a substantial improvement we suspect that one would have to combine the incidence geometry argument of this paper with the compression arguments used in \cite{BS15} and \cite{BT18}, or find some other means to take simultaneous advantage of both the additive and multiplicative structures of the sums involved. We have not been able to see a way of doing this, and indeed in our argument above we separate the additive and multiplicative structures early on, through applying Cauchy-Schwarz to (\ref{splitting expectation}). 

Though in Theorem \ref{Theorem better energy bound for metric poiss} we have refined the state of knowledge about the relationship between the metric poissonian property and additive energy, it is still unclear what the truth should be. We know from \cite{ALT18} that there is no sharp threshold phenomenon, but we still believe that Theorem \ref{Theorem better energy bound for metric poiss} is true for any $C$ greater than $1$, which, up to $(\log N)^{o(1)}$ factors, would be an optimal result. \\

\bibliographystyle{plain}
\bibliography{gcdsumimprovement.bib}

\end{document}